\newtheorem{thm}{Theorem}[section]
\newtheorem{prop}[thm]{Proposition}
\newtheorem{lem}[thm]{Lemma}
\theoremstyle{definition}
\newtheorem{defn}[thm]{Definition}
\theoremstyle{remark}
\newtheorem{rem}[thm]{Remark}
\newcommand{\thmref}[1]{Theorem~\textup{\ref{#1}}}
\newcommand{\propref}[1]{Proposition~\textup{\ref{#1}}}
\newcommand{\secref}[1]{Section~\textup{\ref{#1}}}
\newcommand{\lemref}[1]{Lemma~\textup{\ref{#1}}}
\renewcommand{\AA}{\mathcal A}
\newcommand{\OO}{\mathcal O}
\newcommand{\T}{\mathbb T}
\newcommand{\N}{\mathbb N}
\newcommand{\Z}{\mathbb Z}
\newcommand{\xt}{\otimes}
\newcommand{\ra}{\rightarrow}
\newcommand{\what}{\widehat}
\newcommand{\id}{\text{\textup{id}}}
\newcommand{\cst}{\ensuremath{C^*}}
\newcommand{\csta}{\ensuremath{C^*}-algebra}
\newcommand{\hm}{homomorphism}
\newcommand{\act}{\curvearrowright}
\newcommand{\righttext}[1]{\qquad\text{#1 }}
\newcommand{\dfb}{\ensuremath{\delta_\AA}}
\newcommand{\dnfb}{\ensuremath{\delta_\AA^n}}
\newcommand{\dpi}{\ensuremath{\delta_\pi}}
\newcommand{\al}{\ensuremath{\AA_\Lambda}}
\renewcommand{\bar}{\overline}
\newcommand{\spn}{\operatorname{span}}
\newcommand{\clspn}{\bar{\spn}}
\newcommand{\inv}{^{-1}}
\renewcommand{\subset}{\subseteq}
\newcommand{\pgraph}{$P$-graph}
\newcommand{\wqlo}{WQLO}
\renewcommand{\L}{\ensuremath{\Lambda}}
\renewcommand{\l}{\ensuremath{\lambda}}
\newcommand{\mce}{minimal common extension}
\newcommand{\units}{\ensuremath{\Lambda^0}}
\newcommand{\ck}[1]{\ensuremath{\OO(#1)}}
\newcommand{\ckr}[1]{\ensuremath{\OO_r(#1)}}
\newcommand{\coun}{co-universal}
\newcommand{\cocmp}{coaction-compatible}
\newcommand{\gacmp}{gauge-compatible}
\newcommand{\rep}{representation}
\newcommand{\giut}{Gauge-Invariant Uniqueness Theorem}
\renewcommand{\epsilon}{\varepsilon}
\definecolor{alizarin}{rgb}{0.82, 0.1, 0.26}
\definecolor{blue-violet}{rgb}{0.54, 0.17, 0.89}
\begin{document}

\title
{Gauge-invariant uniqueness theorems for $P$-graphs}

\begin{abstract}

We prove a version of the result in the title that makes use of maximal coactions in the context of discrete groups. Earlier Gauge-Invariant Uniqueness theorems for $C^*$-algebras associated to $P$-graphs and similar $C^*$-algebras exploited a property of coactions known as normality.   In the present paper, the view point is that maximal coactions provide a more natural starting point to state and prove such uniqueness theorems. A byproduct of our approach consists of an abstract characterization of co-universal representations for a Fell bundle over a discrete group.
\end{abstract}

\author{Robert Huben}
\address{Somerville, MA}
\email{rvhuben@gmail.com}

\author{S. Kaliszewski}
\address{School of Mathematical and Statistical Sciences
\\Arizona State University
\\Tempe, Arizona 85287}
\email{kaliszewski@asu.edu}

\author{Nadia S. Larsen}
\address{Department of Mathematics
\\University of Oslo
\\P.O. Box 1053 Blindern
\\N-0316 Oslo, Norway}
\email{nadiasl@math.uio.no}

\author{John Quigg}
\address{School of Mathematical and Statistical Sciences
\\Arizona State University
\\Tempe, Arizona 85287}
\email{quigg@asu.edu}

\date{Revised draft, 25 July 2023.}

\subjclass[2000]{Primary  46L05}
\keywords{
$P$-graph,
gauge-invariant uniqueness,
gauge coaction,
co-universal algebra}

\thanks{This research was partly funded by the Trond Mohn Foundation through the project ``Pure Mathematics in Norway''.}

\maketitle

\section{Introduction}\label{intro}

To put this paper into the context of the literature, we begin with the title: first of all, by ``uniqueness theorem'' we mean here an answer to the following question: given a \csta\ $A$, when is a \hm\ $\pi:A\to B$ faithful?
More precisely,
what is a general enough sufficient condition to have faithfulness of $\pi:A\to B$ that at the same time is verifiable in a suitably direct manner for specific classes of $C^*$-algebras?
Here $B$ denotes any \csta.
Since we are dealing with \csta s, we can suppose that $\pi$ is surjective whenever we wish.
In practice, $A$ typically arises by applying some process to some given data
such as a group, semigroup, graph, or category,
and the criterion should be in terms of the data.
The ``gauge-invariant'' phrase comes from an equivariance condition:
if $A$ carries an action $\Gamma\act A$ of a group $\Gamma$, we can ask for the existence of an action $\Gamma\act B$.
Then a common type of \giut\ also asks for a type of ``partial'' fidelity.

A simple example illustrates one of the challenges of proving a Gauge-Invariant Uniqueness Theorem: for a group $G$, the full and reduced $C^*$-algebras of the group arise out of its data in a natural way, there is a surjection of the former onto the later which is faithful on the finite span of the generators, but the surjection is an isomorphism if and only if the group is amenable. Every Gauge-Invariant Uniqueness Theorem must avoid this potential counterexample, for instance by assuming the group is amenable, enforcing normality of an associated coaction, or considering classes of objects that do not give rise to the full and reduced $C^*$-algebra of the group.
Our innovation --- which removes these restrictions ---
is placed in the setting of $P$-graph algebras, and is phrased with the aid of \emph{maximal} coactions,
leveraging the machinery of maximal coactions to yield an efficient proof of a Gauge-Invariant Uniqueness Theorem.

In the context of this paper the ``ur-''\giut\ (see \cite[Theorem~2.3]{huerae} and \cite[Theorem~3.3]{doppinrob}) involves a directed graph $E=(E^0,E^1,r,s)$. In this case $A$ is the Cuntz-Krieger algebra $C^*(E)$, with canonical (``gauge'') action $\T\act C^*(E)$.
Then the \giut\ says that a representation (i.e., Cuntz-Krieger $E$-family) $\{P_v,S_e\}_{v\in E^0,e\in E^1}$ in $B$ is faithful on $C^*(E)$ if it is equivariant for an action $\T\act B$ and $P_v\ne 0$ for every vertex $v\in E^0$.
More generally, for a $k$-graph $\Lambda$, we take $A$ to be the Cuntz-Krieger algebra $\OO(\Lambda)$, with canonical action $\T^k\act \OO(\Lambda)$,
and then the \giut\ says that a representation $t:\Lambda\to B$ is faithful on $\OO(\Lambda)$ if it is equivariant for an action $\T^k\act B$ and $t_v\ne 0$ for every object (``vertex'') $v\in \Lambda^0$.

Both of the above uniqueness theorems follow quickly
---
though not trivially, since the sufficient condition of non-vanishing $P_v$'s or $t_v$'s, as appropriate, must be compared against a fidelity condition on fixed-point algebras ---
from the following ``abstract'' \giut: if $\Gamma$ is a compact group acting on $A$,
then a \hm\ $\pi:A\to B$ is faithful if it is equivariant for an action $\Gamma\act B$ and is faithful on the fixed-point algebra $A^\Gamma$.
The proof is immediate from the commutative diagram
\begin{equation}\label{abstract compact}
\begin{tikzcd}
A \arrow[r,"\pi"] \arrow[d]
&B \arrow[d]
\\
A^\Gamma \arrow[r]
&B^\Gamma,
\end{tikzcd}
\end{equation}
where the vertical arrows are the canonical conditional expectations (integrating over $\Gamma$),
which are faithful on positive elements,
and of course the bottom arrow is the restriction of $\pi$.
In the application to $k$-graphs,
the hypothesis that $t$ is nonzero on the vertices implies that it is faithful on the fixed-point algebra.

To generalize the above \giut s to other groups $\Gamma$, the thing to do is turn the action of an abelian group $\Gamma$ into a coaction of the dual group $G=\what\Gamma$ (see \cite[Example~A.23]{enchilada}).
When $\Gamma$ is compact, $G$ will be discrete,
and hence (see \cite{discrete}) coactions of $G$ are ``essentially'' equivalent to Fell bundles over $G$.
More precisely, a coaction of $G$ on $A$ is a *-homomorphism $\delta:A\to M(A\xt C^*(G))$ satisfying certain properties (see, e.g., \cite[Definition~A.21]{enchilada}), which in particular guarantee that $A$ is densely spanned by the \emph{spectral subspaces}
\[
A_g=\{a\in A:\delta(a)=a\xt g\}\righttext{for}g\in G.
\]
In the coaction literature, the map $\delta$ itself, or the pair $(A,\delta)$, are interchangeably used to denote a coaction.

These combine to form a Fell bundle $\AA=\{A_g\}_{g\in G}$, and there is a canonical surjection $C^*(\AA)\to A$.
The fixed-point algebra $A^\delta$ coincides with the unit fibre $A_e$ (where $e$ is the identity element of $G$).
In the case $A=\OO(\Lambda)$ for a $k$-graph $\Lambda$, the ``degree functor'' $d:\Lambda\to \N^k$ determines a Fell-bundle structure over $G=\Z^k$.

In the current paper we consider \giut s for \emph{$P$-graphs} (see \secref{pgraph} for the precise definition). Roughly speaking, a $P$-graph involves a pair $(G,P)$ consisting of a subsemigroup $P$ of a group $G$. For $k$-graphs the pair is $(\Z^k,\N^k)$. \pgraph s were introduced in \cite{bropgraph} (see also \cite{ckss}) for abelian $P$.
A \giut\ for abelian $P$ is proved in \cite[Proposition~2.7]{ckss},
and in \cite[Theorem~4.32]{huben} the first author proved a \giut\ for nonabelian $P$.
We remark that the idea of reducing the verification of faithfulness to an amenable group has been employed previously, see e.g. Proposition 6.6 in \cite{lacrae}.
To describe
the GIUT of \cite{huben},
we start with a (not very satisfying) abstract \giut\ for nonabelian groups:
if $(A,\delta)$ is a normal coaction of a discrete group $G$,
then a *-homomorphism $\pi:A\to B$ is faithful if it is equivariant for a coaction of $G$ on $B$
and is faithful on the unit fibre $A_e$.
The appropriate
version of diagram~\eqref{abstract compact} is
\[
\begin{tikzcd}
A \arrow[r,"\pi"] \arrow[d]
&B \arrow[d]
\\
A_e \arrow[r]
&B_e,
\end{tikzcd}
\]
where the vertical arrows are again the canonical conditional expectations.
Since the coaction $\delta$ is assumed to be normal, the left-hand conditional expectation is faithful ---
it is useful to note that normality is automatically satisfied when $G$ is amenable.
We characterize this uniqueness theorem as unsatisfying because it only applies to normal coactions $(A,\delta)$.
Although \cite{huben} does not appeal to the above abstract \giut, \cite[Proposition~4.31]{huben} gives a sufficient condition (``reducing to an amenable ordered group'') for the canonical coaction on the \pgraph\ algebra to be normal.
We quote from \cite{huben}:
``What we mean by a gauge invariant uniqueness theorem for \pgraph s is a similar statement: for any \pgraph, there is exactly one $\Lambda$-faithful, tight, gauge coacting representation up to canonical isomorphism.
We have already seen that in generalizing from $\N$-graphs to \pgraph s, we needed an additional hypothesis (finite alignment). We will see in Lemma~4.24 that such a gauge invariant uniqueness theorem need not be true in general, so another hypothesis is needed. We spend Chapter~3 developing this hypothesis on $(G,P)$, and then in Chapter~4 prove a gauge invariant uniqueness theorem for \pgraph s that satisfy this new hypothesis.''

In the current paper we prove
a \giut~\ref{giut tight} that applies to all
finitely aligned
\pgraph s.
Our strategy is to take full advantage of the theory of maximal coactions.
Since $G$ is discrete,
a coaction $(A,\delta)$ is maximal if and only if the canonical surjection $C^*(\AA)\to A$ is faithful, by \cite[Proposition~4.2]{maximal}.

In \cite[Theorems~3.1 and 3.3]{klqgiut}, three of us used the same strategy for a similar purpose, in the context of product systems over $P$.
In Theorem~3.6 of that same paper we
appeal to the theory of normalizations of coactions to
deduce a uniqueness theorem for normal coactions.
Actually, the uniqueness theorem
\cite[Corollary~4.9]{clsv}
applies essentially the same techniques.
Corollary 4.11 of that same paper is a \giut\ proven by giving sufficient conditions for the canonical coaction to be normal (similarly to \cite{huben}).

In the final \secref{coun} we show how the co-universal $P$-graph algebra can be quickly produced using normalizations of coactions.

This paper is to a large extent intended to illustrate that the use of maximal coactions allows for a more direct and elementary approach, which we hope will be more widely applicable.

We would also like to remark that it may be possible to prove a GIUT through the machinery of product systems. One would show that \ck\L\ admits a realization as the $C^*$-algebra associated to a product system of correspondences over $P$.
Our results eschew a product system realization due to the ground work done by the first author in \cite{huben}.
While this realization is outside the scope of the present article, it would be worthwhile to see it materialized in future work. Moreover, this realization would add nicely to the list of uniqueness results obtained recently by Dor-On, Kakariadis, Katsoulis, Laca, Li \cite{doronenvelope} and Sehnem \cite{sehnemproduct, sehnemenvelope}.

We thank the anonymous referee, whose comments lead to significant improvements of this paper.

\section{Preliminaries}\label{prelim}

Throughout, $G$ will be a discrete group.
We need to exploit the strong connections between Fell bundles over $G$ and coactions of $G$.
For
Fell bundles, we refer to
\cite{eqinduced}, \cite{exelfell}, \cite{ng}, and \cite{discrete},
and for coactions to
\cite[Appendix~A]{enchilada},
\cite{discrete},
\cite{maximal},
\cite{eqinduced},
\cite[Appendix~A]{klqgiut},
\cite{ng},
and
\cite{clda}.
It will be convenient to use some elementary categorical language.
Since $G$ is discrete, coactions and Fell bundles are ``essentially'' the same thing (see \cite{discrete, ng});
however, we will take different approaches to the categories:
we take the category of all coactions,
but then we take the category of all faithful \cocmp\
representations of a fixed Fell bundle, as we explain shortly.

For coactions, there is a subtlety: we must choose what sort of morphisms we want. For this paper, since $G$ is discrete, the easiest choice is the following:
a \emph{morphism} $\phi:(A,\delta)\to (B,\epsilon)$ beween coactions
is just a $\delta-\epsilon$ equivariant *-homomorphism $\phi:A\to B$.
The category of coactions has maximal objects:
a coaction $(A,\delta)$ is \emph{maximal} if
whenever $\pi:(B,\epsilon)\to (A,\delta)$ is a surjective morphism
that is faithful on the fixed-point algebra $B^\epsilon$,
then $\pi$ is an isomorphism (and $\epsilon$ is maximal).
This is not the original definition of maximal, but is a characterization
that is recorded in \cite[Proposition~A.1]{klqgiut}.
Reformulating \cite[Corollary~A.2]{klqgiut} slightly,
we get an abstract GIUT:
\begin{thm}[Abstract GIUT, \cite{exel, klqgiut}]
\label{abstract giut}
If $(A,\delta)$ is a maximal coaction, then
a surjective *-homomorphism $\pi:A\to B$
is faithful if and only if it is faithful on the fixed-point algebra $A^\delta$
and equivariant for $\delta$ and a maximal coaction $\epsilon$ on $B$.
\end{thm}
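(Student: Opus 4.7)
The plan is to read off both implications more or less directly from the characterization of maximality that has just been recalled. The key observation is that, under that characterization, the backward direction is almost a tautology, and the forward direction reduces to checking that maximality is preserved when a coaction is transported along a $C^*$-isomorphism.

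For the backward implication I would assume $\pi\colon A\to B$ is surjective, equivariant for $\delta$ and a maximal coaction $\epsilon$ on $B$, and faithful on $A^\delta$. Then $\pi$ is a surjective morphism $(A,\delta)\to (B,\epsilon)$ in the category of coactions whose restriction to the fixed-point algebra of the source is faithful. Applying the characterization of maximality to $(B,\epsilon)$ forces $\pi$ to be an isomorphism, hence faithful. It is worth noting that this direction does not in fact use the assumed maximality of $(A,\delta)$.

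For the forward implication, assume $\pi$ is both faithful and surjective, so that $\pi$ is a $C^*$-isomorphism. I would transport $\delta$ across $\pi$ by setting
\[
\epsilon := (\pi\otimes \id_{C^*(G)})\circ \delta\circ \pi\inv,
\]
and then carry out the routine check that $\epsilon$ is a coaction of $G$ on $B$ for which $\pi$ is equivariant; fidelity of $\pi$ on $A^\delta$ is automatic. The substantive point is that $\epsilon$ inherits maximality from $\delta$: if $\rho\colon (C,\zeta)\to (B,\epsilon)$ is any surjective morphism that is faithful on $C^\zeta$, then $\pi\inv\circ \rho\colon (C,\zeta)\to (A,\delta)$ is a surjective morphism of coactions with the same fidelity property, so by maximality of $\delta$ it is an isomorphism; hence $\rho$ is an isomorphism as well, and $\epsilon$ is maximal.

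The only step requiring even a minor calculation is the verification that the transported $\epsilon$ satisfies the coaction identity and the requisite nondegeneracy conditions, a standard exercise in transport of structure along a $C^*$-isomorphism. No serious obstacle is anticipated; the theorem is essentially a direct repackaging of the characterization of maximality, with the only real content being that maximality is an isomorphism invariant of coactions.
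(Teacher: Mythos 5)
Your proof is correct and matches the paper's intent: the paper gives no explicit argument, stating only that the theorem is obtained by ``reformulating slightly'' the quoted characterization of maximality from \cite[Proposition~A.1]{klqgiut}, and your two implications (applying that characterization to the maximal target $(B,\epsilon)$ for sufficiency, and transporting $\delta$ along the isomorphism $\pi$ for necessity) are exactly the intended unpacking. Your side remarks --- that sufficiency does not use maximality of $(A,\delta)$, and that the only substantive point in the other direction is that maximality is an isomorphism invariant --- are also accurate.
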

\cite[Proposition~A.1]{klqgiut}, in turn, is based upon
\cite[Proposition~3.1]{clda}.
Given any coaction $(A,\delta)$,
there are a maximal coaction $(A^m,\delta^m)$ and
a surjective morphism $\psi_\delta:(A^m,\delta^m)\to (A,\delta)$
that is faithful on the fixed-point algebra,
and moreover the assignment $(A,\delta)\to (A^m,\delta^m)$ is functorial.
Any such $\psi_\delta$, or the coaction $(A^m,\delta^m)$, is a \emph{maximalization} of $(A,\delta)$,
and is unique up to isomorphism.
The existence of maximalizations was first proved in
\cite[Theorem~3.3]{maximal}, although the construction was nonfunctorial;
then \cite[Theorem~6.4]{fischer} gave a functorial construction of maximalizations.
The category also has minimal objects:
a coaction $(A,\delta)$ is \emph{normal} if
whenever $\pi:(A,\delta)\to (B,\epsilon)$ is a surjective morphism
that is faithful on $A^\delta$,
then $\pi$ is an isomorphism (and $\epsilon$
is normal).
Again, this is not the original definition of normal, but is a characterization
recorded in \cite[Proposition~A.1]{klqgiut}.
Given any coaction $(A,\delta)$
there are a normal coaction $(A^n,\delta^n)$ and
a surjective morphism $\eta_\delta:(A,\delta)\to (A^n,\delta^n)$
that is faithful on the fixed-point algebra,
and moreover the assignment $(A,\delta)\to (A^n,\delta^n)$ is functorial.
Any such $\eta_\delta$, or the coaction $(A^n,\delta^n)$, is a \emph{normalization} of $(A,\delta)$,
and is unique up to isomorphism.

On the other hand, a \emph{representation} of a given Fell bundle $\AA=\{A_g\}_{g\in G}$ in a \csta\ $B$
is a
map $\pi:\AA\to B$ that is linear on the fibres, is multiplicative and involutive,
and in particular is a *-homomorphism on $A_e$.
Then $\clspn\,\pi(\AA)$ is a $C^*$-subalgebra denoted by $C^*(\pi)$.
There is a category of \rep s of $\AA$ in which a
\emph{morphism} $\phi:\pi\to \sigma$ of representations is a
homomorphism $\phi:C^*(\pi)\to C^*(\sigma)$ such that $\sigma=\phi\circ\pi$.
A \emph{universal representation}
of $\AA$ is an initial object $\jmath$
in this category ---
by abstract nonsense all such are isomorphic,
but as usual we imagine that we have picked one, and we call it \emph{the} universal representation.
The \csta\
$C^*(\jmath)$ is denoted by $C^*(\AA)$ and called the
\emph{\csta\ of $\AA$}.
Thus, we have a universal property:
for every representation
$\pi$
of $\AA$ there is a unique *-homomorphism $\phi:\cst(\AA)\to C^*(\pi)$,
called the \emph{integrated form} of $\pi$,
such that $\phi\circ \jmath=\pi$.
A representation $\pi$ is \emph{faithful} if it is injective on the fibres $A_g$
(and it suffices to check this on the \emph{unit fibre $A_e$}).

We say that a \rep\ $\pi$ is \emph{\cocmp}
if there is a (necessarily unique) coaction $\delta_\pi$ on $C^*(\pi)$ such that
$\delta_\pi(\pi(a_g))=\pi(a_g)\xt s$ for $a_g\in A_g$.
The universal \rep\ $\jmath$ is \cocmp\ \cite[Proposition~3.3]{discrete},
and we write $\dfb$ for $\delta_{\jmath}$,
and call it the \emph{canonical coaction} on $\cst(\AA)$.
The faithful \cocmp\ representations of $\AA$ form a subcategory of all representations, in which $\jmath$ is still an initial object.
Moreover, the assignment $\pi\mapsto (C^*(\pi),\delta_\pi)$
is functorial from \rep s to coactions.
A \emph{co-universal representation} of $\AA$
is a final object $\omega$ in this subcategory,
so that for every faithful representation $\pi$ of $\AA$
there is a unique morphism
$\phi:\pi\to\omega$.
By abstract nonsense, $\omega$ is unique up to (unique) isomorphism.
It follows from \cite[Corollary~3.7]{discrete} (see also \cite[Proposition~3.7]{exel}) that
in fact the \emph{regular representation}
$\lambda_\AA$ is a suitable choice; $C^*(\lambda_\AA)$ is denoted by $C^*_r(\AA)$
and called the \emph{reduced $C^*$-algebra} of $\AA$.
The canonical coaction $\delta_{\lambda_\AA}$
is actually the normalization $\dnfb$ of $\dfb$,
and more generally for every faithful \cocmp\ \rep\ $\pi$ the unique morphism $\phi:\pi\to \lambda_\AA$ is the normalization of $\delta_\pi$.

\begin{rem}
Warning: a faithful \rep\ $\pi$ of $\AA$ is not necessarily \cocmp.
To see this, first note that $C^*(\pi)$ is \emph{$G$-graded} in the sense of
\cite[Definition~3.1]{exel}.
If there is a bounded linear map on $C^*(\pi)$ that is the identity on $\pi(A_e)$ and zero on $\pi(A_g)$ for all $g\ne e$, then Exel would call $C^*(\pi)$ \emph{topologically graded}.
The discussion following \cite[Proposition~19.3]{exelfell}
gives an example of a faithful \rep\ $\pi$ for which $C^*(\pi)$ is graded but not topologically graded.
Perhaps more surprising is that $C^*(\pi)$ can be topologically graded without $\pi$ being \cocmp\
(see \cite[Remark~2.2]{eqinduced}).
\end{rem}

For every coaction $(A,\delta)$, the fibres $A_g$ give a Fell bundle $\AA$,
and there is a unique (automatically faithful) representation $\pi$ of $\AA$ such that $A=C^*(\pi)$.
The coaction $\delta$
is maximal if and only if the representation $\pi$ is universal,
and is normal if and only if $\pi$ is co-universal.
Reformulating the latter slightly gives an abstract co-universality result:
\begin{thm}[Abstract co-universal algebra, \cite{exel, klqgiut}]
\label{abstract couniv}
A faithful \rep\ $\pi$
of a Fell bundle $\AA$
is \coun\ if and only if $\dpi$ is normal,
and then for every faithful representation $\sigma$
the unique
morphism
$\phi:\sigma\to \pi$
is a normalization of the coaction $\delta_\sigma$.
\end{thm}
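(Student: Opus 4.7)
The plan is to use the regular representation $\lambda_\AA$ as a benchmark: the preliminaries have already identified $\lambda_\AA$ as a \coun\ representation with normal associated coaction $\delta_{\lambda_\AA}=\dfb^n$, and such that for every faithful representation $\sigma$ the unique morphism $\sigma\to\lambda_\AA$ is a normalization of $\delta_\sigma$. The whole theorem then reduces to transporting these properties from $\lambda_\AA$ to an arbitrary \coun\ representation $\pi$.

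For the forward direction of the biconditional, any two \coun\ representations are isomorphic via a unique isomorphism, by the final-object property applied in both directions, so I obtain an isomorphism $\iota:\lambda_\AA\variso\pi$ in the category of faithful representations. Functoriality of $\sigma\mapsto(C^*(\sigma),\delta_\sigma)$ turns $\iota$ into an isomorphism of coactions, and since $\delta_{\lambda_\AA}=\dfb^n$ is normal, so is $\dpi$. For the converse, assume $\dpi$ is normal. The unique morphism $\phi:\pi\to\lambda_\AA$ is, by the preliminaries, a normalization of $\dpi$, and hence is a surjection that is faithful on the fixed-point algebra. Applying the characterization of normality recorded above to $\dpi$ forces $\phi$ to be an isomorphism of coactions; translating back through the Fell-bundle dictionary, $\phi$ is an isomorphism of representations, and $\pi$ inherits the final-object property of $\lambda_\AA$.

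For the second assertion, let $\sigma$ be a faithful representation and $\phi:\sigma\to\pi$ the unique morphism. Using the isomorphism $\iota$ from the first step, uniqueness forces $\phi=\iota\circ\phi_0$, where $\phi_0:\sigma\to\lambda_\AA$ is the unique morphism and hence a normalization of $\delta_\sigma$. Composing a normalization with an isomorphism onto a normal coaction yields a normalization, so $\phi$ is itself a normalization of $\delta_\sigma$.

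The principal obstacle is the bookkeeping between two parallel categorical pictures: faithful representations of $\AA$ on one side and coactions of $G$ on the other. The key point to use is that the ``unique morphism'' coming from the final-object property of $\lambda_\AA$ in the representation category coincides with the normalization of $\dpi$ living in the coaction category. This is precisely the content of the functoriality of $\sigma\mapsto(C^*(\sigma),\delta_\sigma)$ emphasized in the preliminaries, so no new difficulty appears beyond unwinding definitions.
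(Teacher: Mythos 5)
Your proposal is correct, and it is essentially the argument the paper intends: the theorem is presented there as a "slight reformulation" of the facts already recorded in the preliminaries (that $\lambda_\AA$ is co-universal, that $\delta_{\lambda_\AA}=\dfb^n$ is normal, that the unique morphism $\pi\to\lambda_\AA$ is the normalization of $\dpi$, and the functoriality of $\pi\mapsto(C^*(\pi),\dpi)$), and your write-up simply unwinds that reformulation carefully, using the minimal-object characterization of normality to force $\phi:\pi\to\lambda_\AA$ to be an isomorphism in the converse direction. No gaps.
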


\section{\pgraph s}\label{pgraph}

An \emph{ordered group}
is a pair $(G,P)$, where
$G$ is a group and $P$ a submonoid such that $P\cap P\inv=\{e\}$.
We always give $G$ the partial order defined by
$a\le b$ if $a\inv b\in P$.
A \emph{weakly quasi-lattice ordered} (\emph{\wqlo}) group
(a term coined by Exel \cite[Definition~32.1 (v)]{exelfell})
is an ordered group $(G,P)$
such that for all $p,q\in P$,
if $\{p,q\}$ is
bounded above
then it has
a least upper bound in $P$,
denoted by $p\vee q$.
Note that this property is intrinsic to the monoid $P$,
i.e., is independent of the particular group $G$.
Throughout this paper,
$P$ will always refer to part of a \wqlo\ group $(G,P)$.

A \emph{\pgraph} is a countable category \L\ together with a functor
$d:\L\to P$ satisfying the \emph{factorization property}:
for all $\alpha\in\L$ and $p,q\in P$ such that $d(\alpha)=pq$,
there exist unique $\beta,\gamma\in\L$ such that
$\alpha=\beta\gamma$ and $d(\beta)=p$
(and hence $d(\gamma)=q$).
For $p\in P$ we write $\L^p=d\inv(p)$.
We call elements of \L\ \emph{paths},
we identify the objects with the identity morphisms,
which we call \emph{vertices},
and we write \units\ for the set of vertices.
It is easy to see that $\units=\L^e$ (where we remind the reader that we write $e$ for the unit of $G$),
and that \L\ is a category of paths in the sense of Spielberg \cite{cop}.

We give \L\ the partial order
$\alpha\le \beta$ if $\beta\in \alpha\L$.
We refer to an upper bound of $S\subset\L$,
i.e., an element of $\bigcap_{\gamma\in S}\gamma\L$,
as
a \emph{common extension} of $S$,
and a least upper bound as a \emph{\mce}.
We write $\bigvee S$ for the set of \mce s of $S$.
If $S=\{\alpha,\beta\}$ we write $\alpha\vee\beta=\bigvee S$.
We write $\alpha\Cap\beta$ if $\alpha\L\cap \beta\L\ne\varnothing$,
and $\alpha\perp\beta$ otherwise.

The first author proved in \cite[Lemma~2.38]{huben} that if $S\subset\L$ is finite then
\begin{align*}
\bigvee S&=\left\{\mu\in \bigcap_{\gamma\in S}\gamma \L:d(\mu)=\bigvee d(S)\right\}
\\
\bigcap_{\gamma\in S}\gamma\L
&=\bigsqcup_{\mu\in \bigvee S}\mu\L.
\end{align*}
Actually, \cite{huben} proves it for $S=\{\alpha,\beta\}$, but it generalizes routinely to finite sets.

We say that \L\ is \emph{finitely aligned} if
$\bigvee S$ is finite
for all finite $S\subset\L$.
Note that it suffices to check this for $S$ of the form $\{\alpha,\beta\}$.
If
$\alpha\in \L$ we say
$E\subset \alpha\L$ is
\emph{exhaustive}
if
for all $\beta\in \alpha\L$ there exists $\gamma\in E$ such that
$\beta\Cap\gamma$.

A \emph{representation} of a finitely aligned \pgraph\ \L\ in a \csta\ $B$ is a map
$t:\L\to B$ such that
for all $\alpha,\beta\in\L$
we have
\begin{enumerate}[label=(R\arabic*)]
\item\label{functor}
$t_\alpha t_\beta=t_{\alpha\beta}$ whenever $s(\alpha)=r(\beta)$;

\item\label{source}
$t_\alpha^* t_\alpha=t_{s(\alpha)}$;

\item\label{product is sum}
$t_\alpha t_\alpha^*t_\beta t_\beta^*
=\sum_{\gamma\in \alpha\vee\beta}t_\gamma t_\gamma^*$.
\end{enumerate}
As in \cite[Remark~6.4]{cop}, $\{t_v\}_{v\in \units}$ consists of pairwise orthogonal projections, and $t_\alpha t_\alpha^*\le t_{r(\alpha)}$ for all $\alpha\in \L$.
Regarding item~\ref{product is sum},
note that for distinct $\gamma,\nu\in \alpha\vee\beta$ we have
$\gamma\perp\nu$,
so $\gamma\vee\nu=\emptyset$,
and therefore the right-hand side is a sum of pairwise orthogonal projections.
We write $\cst(t)$ for the \cst-subalgebra of $B$ generated by $\{t_\alpha\}_{\alpha\in\L}$.
The first author proved in \cite[Lemma~2.42]{huben} that
$\cst(t)=\clspn\{t_\alpha t_\beta^*:\alpha,\beta\in\L\}$.

\begin{rem}
Throughout this paper, we
only consider representations of finitely-aligned $P$-graphs, due to
(R3).
If a $P$-graph is not finitely aligned, i.e., there are $\alpha, \beta$ for which $\alpha \vee \beta$ is infinite, then
(R3)
becomes an infinite sum of orthogonal projections, which will not converge in
a $C^*$-algebra.
\end{rem}

A representation $t$ of a finitely aligned \pgraph\ \L\ is
\emph{tight} if for every $v\in \units$ and
every finite exhaustive set $E\subset v\L$ we have
\begin{enumerate}[resume*]
\item\label{ck}
$t_v=\bigvee_{\alpha\in E}t_\alpha t_\alpha^*$.
\end{enumerate}

Given two representations $s$ and $t$ of the same $P$-graph $\Lambda$, we say $s$ \emph{covers} $t$ if there is a *-homomorphism $\phi^s_t: C^*(s) \ra C^*(t)$ satisfying $\phi^s_t(s_\alpha)=t_\alpha$ for all $\alpha \in \Lambda$. That is, $s$ covers $t$ if there is a map $\phi^s_t$ making the following diagram commute:
\begin{equation}\label{eq: homomorphism from covering}
\begin{tikzcd}
\L \arrow[r,"s"] \arrow[dr,"t"']
&C^*(s) \arrow[d,"\phi^s_t",dashed]
\\
&C^*(t)
\end{tikzcd}
\end{equation}
Note that if such a
map $\phi^s_t$ exists, it is unique since its values on the generating set are fixed. In this way we get a category of representations of $\Lambda$, whose morphisms are the above $\phi^s_t$.
See also \cite[Chapter 2]{huben}.

For a representation $t$ of a finitely aligned $P$-graph $\Lambda$, we say that a \emph{gauge coaction} is a coaction $\delta:C^*(t) \rightarrow C^*(t) \xt C^*(G)$ satisfying
\begin{equation}\label{gauge}
\delta(t_\alpha)=t_\alpha\xt d(\alpha)\righttext{for}\alpha\in\Lambda,
\end{equation}
and if such a $\delta$ exists we say that $t$ is \gacmp.
If furthermore
the coaction $\delta$ is maximal,
we say that $t$ has a \emph{maximal gauge coaction}.

One can show that any *-homomorphism $\delta$ satisfying \eqref{gauge} is automatically a coaction (see \cite[Proposition 2.50(3)]{huben}), and that for graphs and higher-rank graphs this gauge coaction corresponds to the usual gauge action.

The following lemma shows that in the context of
representations with gauge coactions, covering maps automatically provide equivariance.

\begin{lem} \label{gauge equivariant}
Let $s$ and $t$ be representations of a $P$-graph $\Lambda$.
Suppose that $s$ covers $t$, with associated surjection $\phi^s_t:C^*(s)\to C^*(t)$.
Further suppose that we have coactions $\delta$ on $C^*(s)$ and $\epsilon$ on $C^*(t)$.
If $\delta$ is a gauge coaction, then $\phi^s_t$ is $\delta-\epsilon$ equivariant if and only if $\epsilon$ is a gauge coaction.
\end{lem}
\begin{proof}
Suppose $\phi^s_t$ is $\delta-\epsilon$ equivariant. Since $\delta$ is a gauge coaction, then for all $\lambda \in \Lambda$, $\delta(s_\lambda)= s_\lambda \otimes d(\lambda)$. Then we have
\begin{align*}
\epsilon(t_\lambda)&= \epsilon(\phi^s_t(s_\lambda)) \\
&= (\phi^s_t \otimes \id_G) (\delta(s_\lambda)) \\
&=(\phi^s_t \otimes \id_G)(s_\lambda \otimes d(\lambda) \\
&= t_\lambda \otimes d(\lambda)
\end{align*}
showing that $\epsilon$ is a gauge coaction, as desired.

Conversely, if $\epsilon$ is a gauge coaction, then
\begin{align*}
\epsilon(\phi^s_t(s_\lambda)) &= \epsilon(t_\lambda)\\
&= t_\lambda \otimes d(\lambda) \\
&= (\phi^s_t \otimes \id_G)(s_\lambda \otimes d(\lambda)\\
&= (\phi^s_t \otimes \id_G) (\delta(s_\lambda))
\end{align*}
That is, $\epsilon \circ \phi^s_t = (\phi^s_t \otimes \id_G) \circ \delta$ on each $s_\lambda$. But since the set of $s_\lambda$'s
generate $C^*(s)$, this identity holds on all of $C^*(s)$, which is to say $\phi^s_t$ is $\delta-\epsilon$ equivariant.
\end{proof}

Let \L\ be a finitely aligned \pgraph. A \emph{Cuntz-Krieger} algebra of \L\ is a pair
$(\ck\L,s)$,
where $\ck\L$ is a \csta\ and $s:\L\to \ck\L$ is a tight representation
that is universal in the sense that it covers every tight representation. Such a universal algebra exists by the typical arguments (see \cite{huben}), and is unique up to isomorphism.

\begin{prop}
The Cuntz-Krieger algebra $\ck\L$ has a maximal gauge coaction $\delta$.
\end{prop}

This proposition extends \cite[Proposition 4.23, item 4]{huben}, which showed the existence of $\delta$, by showing that $(\ck\L, \delta)$ is maximal.

\begin{proof}
Routine calculations verify
that $t_\alpha := s_\alpha\xt d(\alpha) \in \ck\L \xt C^*(G)$ satisfies the axioms (R1)--(R4)
(see the proof of \cite[Proposition 2.50]{huben} for details). Thus by the universal property of $\ck\L$ there is a *-homomorphism $\delta: \ck\L \rightarrow \ck\L \xt C^*(G)$ satisfying $s_\alpha \mapsto s_\alpha\xt d(\alpha)$, i.e., a gauge coaction.

For the maximality, we follow a strategy similar to
\cite[Theorem~7.1 (iv)]{pqrcover}:
let  $\AA_\Lambda$ be the Fell bundle over $G$ associated to the coaction $\delta$,
with universal algebra $\cst(\AA_\Lambda)$ and maximal coaction $(\cst(\AA_\Lambda),\delta_{\AA_\Lambda})$.
First of all, by the general theory of coactions of discrete groups we know that there is a unique
$\delta_{\AA_\Lambda}-\delta$ equivariant surjection
$\psi:\cst(\AA_\Lambda)\to \ck\L$,
and that $\psi$ restricts to the inclusion map on each fibre $A_g$ of $\AA_\Lambda$.
Thus it suffices to show that $\psi$ has a left inverse. This comes down to the universal property of $\mathcal{O}(\Lambda)$ and the fact that the relations (R1)--(R4) are graded. Explicitly, there is a map $\rho_0:\L\to \AA_\Lambda$ defined by $\rho_0(\alpha)=s_\alpha$.
Moreover, $\rho_0$ is a tight representation, because the required identities are satisfied in $\Gamma_c(\AA_\Lambda)$.
Thus $\rho_0$ extends uniquely to a *-homomorphism $\rho:\ck\L\to \cst(\AA_\Lambda)$.
For $\alpha\in \L$ we have
\[
\rho\circ\psi(\rho_0(\alpha))
=\rho(s_\alpha)
=\rho_0(\alpha).
\]
Since the spectral subspaces of the coaction $\delta$,
and hence the Fell bundle \csta\ of $\AA_\Lambda$,
are generated  by the image of $\rho_0$,
it follows that $\rho\circ\psi=\id$, as desired.
\end{proof}

Now we appeal to
\cite[Corollary~A.2]{klqgiut}
to (begin to) get a GIUT for $\ck\L$:

\begin{thm}\label{first step}
If $\pi:\ck\L\to B$ is a surjection that is equivariant for $\delta$ and a maximal coaction $\epsilon$ on $B$,
then $\pi$ is faithful if and only if it is faithful on the fixed-point algebra
$\ck\L^\delta$.
\end{thm}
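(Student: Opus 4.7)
The plan is to leverage what we have just proved --- namely, that the gauge coaction $\delta$ on $\ck\L$ is maximal --- together with the Abstract GIUT (\thmref{abstract giut}) recorded in the preliminaries. Since all of the heavy lifting has been done, the deduction should be essentially automatic; the statement is really a packaging step pinning the abstract framework to the concrete $P$-graph setting.

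One direction is trivial: if $\pi$ is faithful on all of $\ck\L$, then a fortiori $\pi$ is faithful on the $C^*$-subalgebra $\ck\L^\delta$.

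For the converse, suppose $\pi\colon\ck\L\to B$ is surjective, $\delta$-$\epsilon$ equivariant for a maximal coaction $\epsilon$ on $B$, and faithful on $\ck\L^\delta$. The preceding proposition tells us that $(\ck\L,\delta)$ itself is a maximal coaction. Thus all the hypotheses of \thmref{abstract giut} are in place, applied with $A=\ck\L$ and the given data $(B,\epsilon,\pi)$, and the conclusion is exactly that $\pi$ is faithful.

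I do not expect any genuine obstacle in this step: the real work was carried out in the preceding proposition establishing maximality of $\delta$, and in the general coaction theory summarized in \secref{prelim}. The only thing worth double-checking is that \thmref{abstract giut}, as formulated above, requires both equivariance and faithfulness on the fixed-point algebra on the same side of the ``if and only if'' --- which matches precisely the hypothesis we are given here. This theorem then serves as the stepping stone toward the tight version, since once we cut $\ck\L$ down by the Cuntz--Krieger relations we still want to reduce faithfulness to an inspection on the unit fibre.
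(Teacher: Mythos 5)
Your proof is correct and is essentially the paper's own argument: the paper simply cites \cite[Corollary~A.2]{klqgiut}, which is exactly the Abstract GIUT (\thmref{abstract giut}) applied to the maximal coaction $(\ck\L,\delta)$ established in the preceding proposition, just as you do. The one direction you call trivial is indeed trivial, and the equivariance hypothesis for a maximal $\epsilon$ on $B$ is precisely what lets the biconditional in \thmref{abstract giut} collapse to the stated one.
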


This should only be regarded as a first step toward our GIUT, though,
because it is hard to check fidelity on all of $\ck\L^\delta$.
Rather, we would like to know that it suffices to check that $\pi$
is ``\L-faithful'' in the sense that
$\pi(s_\alpha)\ne 0$ for all $\alpha\in\L$.
This will
require an appeal to
\cite[Lemma~4.16]{huben}.

First, a preliminary lemma, which is essentially borrowed from \cite[Lemma 4.13]{huben}, and is included for completeness:

\begin{lem}\label{better tight}
Let $t:\L\to B$ be a representation.
Let $\alpha\in\L v$, and let $E\subset \alpha\L$ be finite exhaustive.
Then there is a unique finite exhaustive set $F\subset v\L$ such that $E=\alpha F$.
Moreover,
\[
\prod_{\gamma\in E}(t_\alpha t_\alpha^*-t_\gamma t_\gamma^*)=0
\]
if and only if
\[
t_v=\bigvee_{\beta\in F}t_\beta t_\beta^*.
\]
\end{lem}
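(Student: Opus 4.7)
The plan is to dispatch the set-theoretic statement first and then reduce the projection identity to a transparent corner-algebra argument.

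For the first part, I would define $F$ by factoring each $\gamma\in E$ uniquely as $\gamma=\alpha\beta_\gamma$ using the factorization property applied to $d(\gamma)=d(\alpha)\cdot(d(\alpha)\inv d(\gamma))$, and set $F=\{\beta_\gamma:\gamma\in E\}$. Uniqueness of $\beta_\gamma$ makes the map $\gamma\mapsto \beta_\gamma$ injective, so $F$ is finite. To see $F\subset v\L$ is exhaustive, take $\mu\in v\L$; then $\alpha\mu\in\alpha\L$, so exhaustiveness of $E$ gives $\gamma=\alpha\beta\in E$ with a common extension $\alpha\mu\nu_1=\alpha\beta\nu_2$. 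Applying the factorization property to this common path at degree $d(\alpha)$ yields $\mu\nu_1=\beta\nu_2$, hence $\mu\Cap\beta$. Uniqueness of $F$ is immediate from the uniqueness half of the factorization property.

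For the equivalence of the two displayed conditions, the key structural input is that the range projections $\{t_\gamma t_\gamma^*\}$ commute: axiom \ref{product is sum} writes $t_\alpha t_\alpha^*t_\beta t_\beta^*$ as a sum of self-adjoint projections, hence $t_\alpha t_\alpha^*t_\beta t_\beta^*$ is self-adjoint and so equals $t_\beta t_\beta^*t_\alpha t_\alpha^*$. Next, for any $\gamma=\alpha\beta\in\alpha\L$, we have $t_\gamma t_\gamma^*=t_\alpha(t_\beta t_\beta^*)t_\alpha^*\le t_\alpha t_{s(\alpha)} t_\alpha^* = t_\alpha t_\alpha^*$. Working inside the commutative $C^*$-algebra generated by these finitely many commuting subprojections of $t_\alpha t_\alpha^*$, De Morgan's law gives
\[
\prod_{\gamma\in E}(t_\alpha t_\alpha^*-t_\gamma t_\gamma^*)
= t_\alpha t_\alpha^* - \bigvee_{\gamma\in E} t_\gamma t_\gamma^*,
\]
so the product vanishes iff $t_\alpha t_\alpha^*=\bigvee_{\gamma\in E} t_\gamma t_\gamma^*$.

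Finally, I would transport this last identity across the corner map $\Phi:t_v B t_v\to t_\alpha t_\alpha^* B t_\alpha t_\alpha^*$ defined by $\Phi(x)=t_\alpha x t_\alpha^*$. Using $t_\alpha^*t_\alpha=t_v$, one checks $\Phi$ is a $*$-homomorphism on the corner with inverse $y\mapsto t_\alpha^* y t_\alpha$, so it is a $*$-isomorphism; in particular it sends $t_v$ to $t_\alpha t_\alpha^*$, sends $t_\beta t_\beta^*$ to $t_\gamma t_\gamma^*$ with $\gamma=\alpha\beta$, and preserves finite joins of (commuting) projections. Thus $t_v=\bigvee_{\beta\in F} t_\beta t_\beta^*$ iff $t_\alpha t_\alpha^*=\bigvee_{\gamma\in E}t_\gamma t_\gamma^*$, completing the equivalence. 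The main obstacle, though a minor one, is justifying that joins of the commuting projections behave well and are preserved by $\Phi$; once commutativity via \ref{product is sum} is in hand, everything else is routine manipulation in the corner.
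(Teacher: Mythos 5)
Your proof is correct and follows essentially the same route as the paper: factor each $\gamma\in E$ as $\alpha\beta$ to get $F$, check exhaustiveness by pushing a given $\mu\in v\L$ forward to $\alpha\mu$ and cancelling $\alpha$, and then reduce the projection identity by conjugating with $t_\alpha$ (the paper conjugates the product directly and applies the De Morgan identity on the $F$ side, whereas you apply it on the $E$ side and transport the join through the corner isomorphism --- a cosmetic difference). Your explicit verification that axiom \ref{product is sum} forces the range projections to commute is a point the paper leaves implicit, and is a worthwhile addition.
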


\begin{proof}
For all $\gamma\in E$, there is a unique $\beta\in v\L$ such that $\gamma=\alpha\beta$.
Let $F$ be the set of such $\beta$. Then $F\subset v\L$ is finite, and we will verify that it is exhaustive.
Let $\mu\in v\L$.
Then $\alpha\mu\in \alpha\L$, so because $E$ is exhaustive we can choose $\gamma\in E$ such that
$\alpha\mu\L\cap \gamma\L\ne\varnothing$.
Writing $\gamma=\alpha\beta$, we have $\beta\in F$ and
$\mu\L\cap \beta\L\ne\varnothing$
since \L\ is left cancellative.

For the other part, we compute
\begin{align*}
\prod_{\gamma\in E}(t_\alpha t_\alpha^*-t_\gamma t_\gamma^*)
&=\prod_{\beta\in F}(t_\alpha t_\alpha^*-t_{\alpha\beta} t_{\alpha\beta}^*)
\\&=\prod_{\beta\in F}(t_\alpha t_\alpha^*-t_\alpha t_\beta t_\beta^* t_\alpha^*)
\\&=\prod_{\beta\in F}t_\alpha(t_v-t_\beta t_\beta^*)t_\alpha^*
\\&=t_\alpha\left(\prod_{\beta\in F}(t_v-t_\beta t_\beta^*)\right)t_\alpha^*,
\end{align*}
which is zero if and only if
\[
\prod_{\beta\in F}(t_v-t_\beta t_\beta^*)=0.
\]
Now, for each $\beta\in F$ the range projection $t_\beta t_\beta^*$ is less than or equal to the vertex projection $t_v$,
so $\prod_{\beta\in F}(t_v-t_\beta t_\beta^*)=0$
if and only if $t_v=\bigvee_{\beta\in F}t_\beta t_\beta^*$.
\end{proof}

\begin{prop}\label{faithful}
Let \L\ be a finitely aligned \pgraph, and let $t:\L\to B$ be a tight representation
that is \L-faithful,
i.e., such that
\[
t_\alpha\ne 0\righttext{for all}\alpha\in\L.
\]
Then the associated *-homomorphism $\pi:\ck\L\to B$ is faithful on the fixed-point algebra $\ck\L^\delta$.
\end{prop}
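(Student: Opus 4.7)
The plan is to reduce the claim to injectivity on a directed family of finite-dimensional $*$-subalgebras whose union is dense in $\OO(\L)^\delta$, and then appeal to \cite[Theorem~4.16]{huben}.

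First I identify the fixed-point algebra explicitly: since $\delta(s_\alpha s_\beta^*) = s_\alpha s_\beta^* \xt d(\alpha)d(\beta)\inv$, we have
\[
\OO(\L)^\delta = \clspn\{s_\alpha s_\beta^* : \alpha,\beta \in \L,\ d(\alpha)=d(\beta)\}.
\]
For each finite $F \subset \L$ closed under \mce s---using finite alignment, every finite subset can be enlarged to such an $F$---the space
\[
D_F := \spn\{s_\alpha s_\beta^* : \alpha,\beta \in F,\ d(\alpha) = d(\beta)\}
\]
is a finite-dimensional $*$-subalgebra of $\OO(\L)^\delta$: closedness under involution is immediate, and closedness under multiplication follows from \ref{product is sum} together with the structural decomposition $\bigcap_{\gamma\in S}\gamma\L = \bigsqcup_{\mu\in\bigvee S}\mu\L$ quoted before \ref{functor}. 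Moreover $\bigcup_F D_F$ is dense in $\OO(\L)^\delta$.

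Next I would show that $\pi|_{D_F}$ is injective for each such $F$. This is where the tight assumption and $\L$-faithfulness must interact, and \lemref{better tight} is designed precisely for this purpose: it reformulates \ref{ck} as the vanishing of the product $\prod_{\gamma\in E}(t_\alpha t_\alpha^* - t_\gamma t_\gamma^*)$ for every $\alpha\in\L$ and every finite exhaustive $E\subset\alpha\L$, which is precisely the algebraic form in which the spatial hypotheses of \cite[Theorem~4.16]{huben} are expressed. Invoking that theorem, together with $t_\alpha \ne 0$ for all $\alpha$, yields that $\pi|_{D_F}$ is injective.

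The main obstacle is the first step: organizing $\OO(\L)^\delta$ as a well-behaved directed union of finite-dimensional $*$-subalgebras in the generality of a \fapg. This requires the finite-alignment hypothesis in earnest, together with the structural decomposition of intersections in terms of \mce s. Once injectivity on each $D_F$ is established via \cite[Theorem~4.16]{huben}, injectivity of $\pi$ on $\OO(\L)^\delta$ follows by continuity.
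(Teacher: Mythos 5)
Your overall strategy---exhibit the core as a dense increasing union of finite-dimensional $*$-subalgebras, prove injectivity on each piece, and pass to the limit (which is legitimate, since an injective $*$-homomorphism is isometric)---is the standard one, and your key external input is the same as the paper's: the paper's entire proof consists of translating the tightness condition \ref{ck} into the product form via \lemref{better tight} and then quoting item~2 of \cite[Lemma~4.16]{huben} for the universal tight representation $s$ and for $t$. That cited lemma already delivers faithfulness on the fixed-point algebra directly, so the finite-dimensional scaffolding you build is either superfluous (if you are allowed to quote the lemma in full strength) or amounts to re-deriving the inside of Huben's proof while simultaneously citing it for the one step where the hypotheses $t_\alpha\ne 0$ and tightness actually do work.

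The concrete gap is in your construction of the algebras $D_F$: closing $F$ under minimal common extensions does \emph{not} make $D_F$ closed under multiplication. Using \ref{product is sum} one computes
\[
s_\alpha s_\beta^*\, s_\mu s_\nu^* \;=\; \sum_{\gamma\in\beta\vee\mu} s_{\alpha\gamma'}\, s_{\nu\gamma''}^*,
\qquad\text{where } \gamma=\beta\gamma'=\mu\gamma'',
\]
and the paths that must belong to $F$ are $\alpha\gamma'$ and $\nu\gamma''$ (which do satisfy $d(\alpha\gamma')=d(\gamma)=d(\nu\gamma'')$), not the element $\gamma$ of $\beta\vee\mu$ itself. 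So $F$ must be closed under the operation $(\alpha,\beta,\mu,\nu)\mapsto\{\alpha\gamma',\nu\gamma''\}$---the ``$\Pi E$''-type closure familiar from the finitely aligned $k$-graph literature---and one must separately verify that finite alignment keeps such closures finite. As written, $D_F$ need not be a $*$-subalgebra, so the directed-union argument does not start. This is exactly the point you flag as ``the main obstacle'' and then do not supply; it is repairable by the standard device, but the paper avoids the issue entirely by letting \cite[Lemma~4.16]{huben} absorb it.
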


\begin{proof} Note that $\pi$ is the homomorphism $\phi^s_t$ from \eqref{eq: homomorphism from covering} with $s$ taken to be the universal tight representation of $\L$.
With the aid of \lemref{better tight}
above, this follows immediately from item~2 of \cite[Lemma~4.16]{huben}\footnote{Note that there is a minor typo in the quoted lemma: $s$ and $t$ are representations of \L, not of $(G,P)$.}.
\end{proof}

\propref{faithful} should be
compared with the sufficiency condition in \cite[Lemma~3.6]{sehnemproduct}.
Note that \cite{sehnemproduct} is set in a general product-system context.

We are now ready for the announced GIUT.

\begin{thm}[Gauge-Invariant Uniqueness Theorem for Tight Representations and Maximal Coactions]\label{giut tight}
Let \L\ be a finitely aligned
\pgraph, and
let $t:\L\to B$ be a \L-faithful tight representation
with a
maximal gauge coaction $\epsilon$.
Then the associated *-homomorphism $\pi:\ck\L\to B$ is faithful.
\end{thm}

\begin{proof}
By \propref{faithful}, $\pi$ is faithful on the fixed-point algebra $\ck\L^\delta$, and since both $\delta$ and $\epsilon$ are gauge coactions, by \lemref{gauge equivariant}
the *-homomorphism $\pi$ is $\delta-\epsilon$ equivariant. Thus the result
follows from \thmref{first step}.
\end{proof}
That is, we have shown that the universal tight representation $s:\L\to \ck\L$ is the unique $\Lambda$-faithful tight representation
with a maximal gauge coaction.

\thmref{giut tight} should be compared with \cite[Theorem~3.10]{sehnemproduct}.

\section{Co-universal \pgraph\ algebra}\label{coun}

Recall our discussion of co-universal representations at the end of \secref{prelim},
and in particular \thmref{abstract couniv}.

Let $\L$ be a finitely aligned \pgraph,
and let $\delta$ be the canonical coaction on the Cuntz-Krieger algebra \ck\L.
Further let \al\  be the associated Fell bundle.
We find it convenient to identify \ck\L\ with the Fell bundle algebra $C^*(\al)$, so that terminology can be freely passed between the two.
In particular, the regular \rep\ $s_\L$ of $\L$,
by definition,
corresponds to the regular \rep\ $\l_{\al}$ of $\al$,
and the reduced \csta\ of $\L$
is defined to be $\ckr\L=\cst(s_\L)$.
Recall from \eqref{eq: homomorphism from covering} that for a \rep\ $t$ of $\L$
the integrated form
is the surjection
\[
\phi^s_t:\ck\L\to C^*(t),
\]
where $s:\Lambda\to \ck\L$ is the universal tight representation.
The following lemma now follows from the above and
\thmref{abstract couniv}.

\begin{lem}
The regular representation $s_\L$
is a normalization
of $\delta$.
\end{lem}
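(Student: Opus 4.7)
The plan is to appeal directly to \thmref{abstract couniv}, since essentially all the work has been packaged into the preliminaries once we set up the right identifications. First I would unpack the identification $\ck\L = \cstfb$: the universal tight representation $s$ of $\L$ corresponds to the universal representation $\jmath$ of the Fell bundle $\AA$, so in particular $\delta = \delta_\AA$ is the canonical coaction associated with $\jmath$, and $s_\L$ corresponds to the regular representation $\l_\AA$ of $\AA$ with $\ckr\L = \cst(\l_\AA) = \cst_r(\AA)$.

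Next I would invoke the general theory of Fell bundles over discrete groups recalled in \secref{prelim}: the regular representation $\l_\AA$ is faithful on fibres and is co-universal, and its canonical coaction $\delta_{\l_\AA}$ is the normalization $\delta_\AA^n$ of $\delta_\AA$. Under our identification this says that $s_\L$ is a faithful, co-universal representation of $\L$ and $\delta_{s_\L}$ is normal.

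Finally, applying \thmref{abstract couniv} to the faithful representations $\jmath$ and $\l_\AA$ (equivalently, $s$ and $s_\L$), the unique morphism from $\jmath$ to $\l_\AA$ is a normalization of $\delta_\jmath = \delta$. Transporting back along the identification, this unique morphism is precisely the integrated form $\phi_{s_\L}:\ck\L\to\ckr\L$, which is what is meant by the assertion that $s_\L$ itself is a normalization of $\delta$.

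There is no real obstacle here beyond bookkeeping: the statement is essentially a restatement of \thmref{abstract couniv} once one has identified $\ck\L$ with $\cstfb$ and $s_\L$ with $\l_\AA$. The only thing worth double-checking is that the coaction $\delta$ constructed in \secref{pgraph} is genuinely the canonical coaction $\delta_\AA$ associated to the Fell bundle of spectral subspaces, so that the normalization of $\delta$ in the sense of coactions coincides with the one produced by passing to the regular representation; but this is built into the identification of $\ck\L$ with $\cstfb$ that we are already using.
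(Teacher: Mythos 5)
Your proposal is correct and follows essentially the same route as the paper: the authors likewise identify $\ck\L$ with $\cstfb$ (legitimate because $\delta$ was shown to be maximal, so the universal tight representation corresponds to the universal representation $\jmath$ of $\AA$) and then simply cite the general fact that $\delta_{\l_\AA}=\dnfb$ together with \thmref{abstract couniv}. Your write-up just makes explicit the bookkeeping that the paper leaves implicit.
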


Recall that a \rep\ $t$ of \L\ is called \L-faithful if $t_\alpha\ne 0$ for all $\alpha\in\L$, and this implies that the associated \rep\ of the Fell bundle $\al$ is faithful as well.
The \L-faithful \gacmp\ representations give a full subcategory of the representations of $\Lambda$.

\begin{defn}
A representation $t$ of \L\ is \emph{\coun}
if it is terminal in the category of $\L$-faithful \gacmp\ \rep s, in which case
$C^*(t)$ is a \emph{\coun\ \csta} of $\L$.
\end{defn}

Now \thmref{abstract couniv} quickly implies the following result, which recovers \cite[Theorem 4.22]{huben} (and \ckr\L\ is there denoted $C^*_{\operatorname{min}}(\Lambda)$).

\begin{thm}\label{coun pgraph alg}
A \L-faithful \gacmp\ \rep\ $t$ of \L\ is \coun\ if and only if $\delta_t$ is normal.
In particular,
the regular \rep\
of \L\ is \coun,
and \ckr\L\ is a \coun\ \csta\ of \L.
\end{thm}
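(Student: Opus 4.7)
The plan is to deduce both assertions from the abstract result \thmref{abstract couniv}, transported across the identification of $\ck\L$ with $\cstfb$ and of representations of $\L$ with representations of the associated Fell bundle $\AA$ already set up in this section.

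First I would verify the dictionary between the two sides. An $\L$-faithful representation $t$ of $\L$ corresponds to a faithful representation $\pi$ of $\AA$: as already noted, $\L$-faithfulness forces faithfulness on the unit fibre $A_e$, which suffices. Under this correspondence the coaction $\delta_t$ on $C^*(t)$ coincides with the canonical coaction $\dpi$ on $C^*(\pi)$, since both are determined on fibres by $a_x \mapsto a_x \xt x$. Moreover, a morphism of $\L$-faithful representations in the sense of the definition just above the theorem is exactly a morphism of the corresponding faithful representations of $\AA$, so terminality transfers in either direction.

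Applying \thmref{abstract couniv} on the Fell-bundle side then gives the first assertion: $t$ is \coun\ if and only if $\delta_t$ is normal. For the ``in particular'' part, the preceding lemma states that $s_\L$ is a normalization of $\delta$, hence $\delta_{s_\L}$ is the normal coaction appearing in the normalization of $(\ck\L,\delta)$ and is therefore normal. By the equivalence just proved, $s_\L$ is \coun, so $\ckr\L = C^*(s_\L)$ is a \coun\ \csta\ of $\L$.

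Since all the substantive work sits in \thmref{abstract couniv}, the Fell-bundle identification, and the preceding lemma, no serious obstacle is anticipated; the argument is essentially bookkeeping that transports the abstract characterization from the category of faithful Fell-bundle representations to the category of $\L$-faithful \pgraph\ representations.
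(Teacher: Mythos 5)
Your proposal is correct and follows exactly the route the paper intends: the paper gives no written proof beyond the remark that \thmref{abstract couniv} ``quickly implies'' the result, and your argument simply spells out the bookkeeping (the dictionary between $\L$-faithful representations and faithful representations of $\AA$, plus the preceding lemma identifying $s_\L$ as a normalization of $\delta$) that the paper leaves implicit.
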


\thmref{coun pgraph alg} should be compared with
\cite[Theorems~4.9 and 5.3]{doronenvelope} and \cite[Theorem~5.1]{sehnemenvelope}.
Note that \cite{doronenvelope, sehnemenvelope} are set in a general product-system context.


\providecommand{\bysame}{\leavevmode\hbox to3em{\hrulefill}\thinspace}
\providecommand{\MR}{\relax\ifhmode\unskip\space\fi MR }
\providecommand{\MRhref}[2]{%
  \href{http://www.ams.org/mathscinet-getitem?mr=#1}{#2}
}
\providecommand{\href}[2]{#2}

\end{document}